\theoremstyle{plain}
\newtheorem{proposition}{Proposition}[section]
\theoremstyle{definition}
\newtheorem{remark}{Remark}[section]
\numberwithin{equation}{section}
\newcommand{\bfe}{\mathbf{e}}
\newcommand{\Sn}{\mathcal{S}^n}
\newcommand{\cH}{\mathcal{H}}
\newcommand{\mR}{\mathbb{R}}
\newcommand{\oO}{\overline{\Omega}}
\DeclareMathOperator{\diag}{diag}
\begin{document}

\author[K. K. Brustad]{Karl K. Brustad}
\title[Special Lagrangian potential equation]{Counterexamples to the\\ comparison principle in the special Lagrangian potential equation}
\address{Frostavegen 1691\\ 7633 Frosta\\ Norway}
\email{brustadkarl@gmail.com}

\begin{abstract}
For each $k = 0,\dots,n$ we construct a continuous \emph{phase} $f_k$, with $f_k(0) = (n-2k)\frac{\pi}{2}$, and viscosity sub- and supersolutions $v_k$, $u_k$, of the elliptic PDE $\sum_{i=1}^n \arctan(\lambda_i(\cH w)) = f_k(x)$ such that $v_k-u_k$ has an isolated maximum at the origin.

It has been an open question whether the comparison principle would hold in 
this second order equation
for arbitrary continuous phases $f\colon \mR^n\supseteq\Omega\to (-n\pi/2,n\pi/2)$.
Our examples show it does not.

\end{abstract}

\maketitle

\section{Introduction}

The \emph{special Lagrangian potential operator} is the mapping $F\colon \Sn\to\mR$ given by
\[F(X) := \sum_{i=1}^n \arctan(\lambda_i(X))\]
where $\lambda_1(X)\leq\cdots\leq \lambda_n(X)$ are the eigenvalues of the symmetric $n\times n$ matrix $X$.
The corresponding equation
\begin{equation}\label{eq:SL_equation}
F(\cH w) = f(x)
\end{equation}
in $\Omega\subseteq\mR^n$, including the autonomous version
\begin{equation}\label{eq:SL_equation_autonom}
F(\cH w) = \theta,
\end{equation}
has attained much interest since it was introduced in \cite{MR666108}.
For a right-hand side constant $\theta \in (-n\pi/2,n\pi/2)$ the solutions of \eqref{eq:SL_equation_autonom} have a nice geometrical interpretation. The graph of the gradient $\nabla w$ in $\Omega\times \mR^n$ is a \emph{special Lagrangian manifold}. i.e., it is a Lagrangian manifold of minimal area. 
See \cite{MR4179860} and the references therein.

Recently, \cite{MR4147574} were able to prove the comparison principle for \eqref{eq:SL_equation} when $f$ is continuous and avoids the \emph{special phase values}
\[\theta_k := (n-2k)\frac{\pi}{2},\qquad k = 1,\dots,n-1.\]
In Section 3 we show that their proof can be somewhat simplified by applying a result in \cite{brustad2020comparison} valid for equations on the generic form \eqref{eq:SL_equation}. However, our main purpose of this short note is to demonstrate how the comparison principle may fail when $\theta_k\in f(\Omega)$. Interestingly, the comparison principle \emph{is} valid in \eqref{eq:SL_equation_autonom} for \emph{all} $\theta\in\mR$. This follows immediately from the facts that $X\leq Y$ implies $\lambda_i(X)\leq\lambda_i(Y)$, $\lambda_i(X+\tau I) = \lambda_i(X) + \tau$, and that $\arctan$ is strictly increasing. Thus, $F$ is elliptic and $F(X+\tau I) > F(X)$ for all $X\in \Sn$, $\tau>0$. See for example Proposition 2.6 in \cite{brustad2020comparison}.

In our construction of the counterexamples, we shall take advantage of a couple of symmetries in $F$. Firstly, as $F(X)$ only depends on the eigenvalues of $X$, we have $F(QXQ^\top) = F(X)$ for every orthogonal matrix $Q$.  Secondly, since $\arctan$ is odd -- and since $\lambda_i(-X) = -\lambda_{n-i+1}(X)$ and the different eigenvalues are treated equally by $F$ -- it follows that $F$ is odd as well. Moreover, we shall make use of the $n\times n$ \emph{exchange matrix}
\[J := \begin{bmatrix}
0 &  & 1\\
 & \reflectbox{$\ddots$} & \\
1 &  & 0
\end{bmatrix}.\]
It is the corresponding matrix to the reverse order permutation on the set $\{1,2,\dots,n\}$. Obviously, $J$ is symmetric and orthogonal.

\section{The counterexamples}

For $k = 0,\dots,n$, define $v_k, u_k\in C(\mR^n)$ as
\begin{align*}
v_k(x) &:= \frac{1}{4} - \sum_{i=1}^{k}|x_i| + \sum_{i=k+1}^{n}\frac{1}{2}|x_i|^{3/2},\\
u_k(x) &:= -v_{n-k}(Jx),
\end{align*}
and let $f_k\in C(\mR^n)$ be the continuous extension of
\[f_k(x) := -\sum_{i=1}^{k}\arctan\left(\frac{3}{8}|x_i|^{-1/2}\right) + \sum_{i= k+1}^{n}\arctan\left(\frac{3}{8}|x_i|^{-1/2}\right),\qquad x_i\neq 0.\]
Observe that
\[f_k(0) = -k\frac{\pi}{2} + (n - k)\frac{\pi}{2} = \theta_k.\]

\begin{proposition}
The functions $v_k$ and $u_k$ are, respectively, viscosity sub- and supersolutions of the equation
\begin{equation}\label{eq:k_equation}
\sum_{i=1}^{n}\arctan\left(\lambda_i(\cH w)\right) = f_k(x)\qquad\text{in $\mR^n$.}
\end{equation}
\end{proposition}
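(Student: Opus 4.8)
The plan is to verify the sub- and supersolution properties directly from the definition of viscosity solutions, exploiting the product structure of $v_k$, $u_k$, and $f_k$. First I would reduce the supersolution statement for $u_k$ to the subsolution statement for $v_{n-k}$: since $u_k(x) = -v_{n-k}(Jx)$, $J$ is orthogonal, and $F$ is odd with $F(QXQ^\top)=F(X)$, a test function touching $u_k$ from below at a point corresponds to a test function touching $v_{n-k}$ from above at the reflected point, and one checks that $f_k(x) = -f_{n-k}(Jx)$; hence it suffices to prove that each $v_k$ is a viscosity subsolution of \eqref{eq:k_equation}. So the whole proposition comes down to a single family of subsolution checks.

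Next I would analyze where the test-function condition is nontrivial. Away from the coordinate hyperplanes $\{x_i = 0\}$, the function $v_k$ is smooth, so I would simply compute $\cH v_k$ there: it is diagonal, with entries $0$ for $i \le k$ and $-\tfrac14 |x_i|^{-1/2}$ for $i > k$ (from differentiating $\tfrac12|x_i|^{3/2}$ twice). Then $F(\cH v_k(x)) = \sum_{i>k}\arctan(-\tfrac14|x_i|^{-1/2})$, and I must compare this with $f_k(x) = -\sum_{i\le k}\arctan(\tfrac38|x_i|^{-1/2}) + \sum_{i>k}\arctan(\tfrac38|x_i|^{-1/2})$. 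The negative terms $-\sum_{i\le k}\arctan(\cdots) < 0$ only help (they lower the right-hand side), and on the remaining indices one needs $\arctan(-\tfrac14 t) \ge \arctan(\tfrac38 t)$ — wait, that's false for $t>0$; so in fact the smooth points where all $x_i\neq 0$ will \emph{not} satisfy the subsolution inequality pointwise, which tells me the construction must really be forcing the action onto the hyperplanes. The correct reading is that a valid test function $\varphi$ touching $v_k$ from above at a point with some $x_i=0$ cannot exist too freely because $v_k$ has a downward cusp ($-|x_i|$) in the first $k$ coordinates and only a mild $C^{1,1/2}$ kink in the last $n-k$; I would therefore focus the entire argument on points $x$ lying in at least one hyperplane.

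So the core step is: fix $\hat x$ with, say, coordinate set $A = \{i : \hat x_i = 0\}$ nonempty, and let $\varphi \in C^2$ touch $v_k$ from above at $\hat x$. For $i \in A$ with $i \le k$, the term $-|x_i|$ has a concave corner, so no $C^2$ function can touch from above there — meaning such points cannot be maxima of $v_k - \varphi$ unless that index contributes $\hat x_i \neq 0$ after all; this rules out test functions entirely at those points, so the subsolution condition is \emph{vacuous} there. For $i \in A$ with $i > k$, the term $\tfrac12|x_i|^{3/2}$ is $C^1$ with vanishing derivative and infinite one-sided second derivative behavior $|x_i|^{3/2}\sim$, so for $\varphi$ to touch from above we need $\partial_{ii}\varphi(\hat x) = +\infty$-like domination — more precisely $\varphi$ must satisfy, along the $x_i$-axis, $\varphi(\hat x + t e_i) \ge \tfrac12|t|^{3/2} + (\text{smooth})$, which is impossible for finite $\partial_{ii}\varphi(\hat x)$ unless... again it forces no admissible $\varphi$ to exist, OR it forces $\lambda$ values of $\cH\varphi$ to blow up so that $F(\cH\varphi(\hat x))$ is large. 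I would make this rigorous by the standard one-dimensional comparison: if $g(t) = \tfrac12|t|^{3/2}$ and $\psi\in C^2$ with $\psi \ge g$ near $0$ and $\psi(0)=g(0)=0$, then $\psi'(0)=0$ and, testing, $\psi(t)\ge \tfrac12 t^{3/2}$ for $t>0$ gives $\psi''(0) = \lim 2\psi(t)/t^2 \ge \lim t^{-1/2} = +\infty$, a contradiction. Hence \emph{no} $C^2$ function touches $v_k$ from above at any point of any hyperplane $\{x_i=0\}$ with $i > k$ either, and combined with the $i\le k$ case, the set of points admitting an upper test function is exactly $\{x : x_i \neq 0 \text{ for all } i > k,\ \text{and } x_i \neq 0 \text{ for all } i \le k\}$...

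Let me reconsider; I realize the clean statement is: $v_k$ admits an upper test function at $\hat x$ only if $\hat x_i \neq 0$ for all $i > k$ (the concave $-|x_i|$ pieces never obstruct touching from above, they help). At such points $v_k$ is smooth in the $x_i$, $i>k$, directions, and I compute $F(\cH v_k)$ restricted appropriately; the subgradient structure in the $i\le k$ directions (where $v_k$ is concave) means $\cH\varphi(\hat x) \ge \cH v_k(\hat x)$ need only hold after accounting for those corners, and ellipticity of $F$ together with $F$ increasing under $+\tau I$ lets me pass from $\cH\varphi \ge (\text{diagonal with the } -\tfrac14|x_i|^{-1/2} \text{ entries for } i>k \text{ and } -\infty\text{-tending entries for } i\le k)$ to $F(\cH\varphi(\hat x)) \ge \sum_{i>k}\arctan(-\tfrac14|\hat x_i|^{-1/2}) + \sum_{i\le k}(-\tfrac{\pi}{2}) = -k\tfrac{\pi}{2} + \sum_{i>k}\arctan(-\tfrac14|\hat x_i|^{-1/2})$, and I need this $\ge f_k(\hat x) = -\sum_{i\le k}\arctan(\tfrac38|\hat x_i|^{-1/2}) + \sum_{i>k}\arctan(\tfrac38|\hat x_i|^{-1/2})$. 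The $i\le k$ comparison $-\tfrac{\pi}{2} \ge -\arctan(\tfrac38|\hat x_i|^{-1/2})$ is false, so again the only viable points have \emph{all} $\hat x_i\neq 0$, where there's no corner at all and $v_k$ is genuinely smooth. The hard part — and the true content — is therefore the one-sided second-derivative estimate at the $i>k$ hyperplanes showing test functions are so constrained that the comparison $\arctan(\partial_{ii}\varphi) \ge \arctan(\tfrac38|\hat x_i|^{-1/2})$ forced by $\partial_{ii}\varphi \ge$ (large) actually does hold. I would structure the final proof as: (1) reduce $u_k$ to $v_{n-k}$; (2) show that at any point $\hat x$, if $\varphi$ touches $v_k$ from above, then for each $i>k$ with $\hat x_i = 0$ the Hessian entry $\partial_{ii}\varphi(\hat x) \ge \tfrac38|\hat x_j|^{-1/2}$-type bound fails to be needed because actually $\partial_{ii}\varphi = +\infty$ is forced — no wait — rather, I show that along each such axis $\varphi$ cannot touch from above at all (the $\tfrac32$-power cusp obstruction above), so admissible $\hat x$ have $\hat x_i\neq 0$ for $i>k$; (3) then note $v_k$ is concave in $x_1,\dots,x_k$, so $\cH\varphi(\hat x)$ must dominate a matrix whose first $k$ eigenvalues are $\le$ any negative number (take a supporting affine function in those variables and perturb), giving $\lambda_i(\cH\varphi)\to-\infty$ effectively, so $\sum_{i=1}^k\arctan(\lambda_i) \le -k\pi/2 + \epsilon$ contributions; and (4) reconcile with $f_k$. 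I expect the main obstacle to be making step (3) precise — extracting from "$\varphi$ touches a function with a concave $|x_i|$-corner from above" the correct one-sided bound on $\cH\varphi$ and feeding it through the eigenvalue-monotonicity of $F$ — since one must be careful that concavity in some coordinates gives a Hessian \emph{inequality} only in those directions, and combining it with the smooth/cuspidal behavior in the other coordinates requires the block structure of $\cH v_k$ and a limiting argument using $F(X+\tau I)>F(X)$.
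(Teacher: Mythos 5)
Your proposal goes wrong at the very first computation and never recovers. The second derivative of $t\mapsto\frac12|t|^{3/2}$ is
\[
\frac{d^2}{dt^2}\Bigl(\tfrac12|t|^{3/2}\Bigr)=\frac{3}{8}|t|^{-1/2}\qquad(t\neq 0),
\]
which is \emph{positive}, not $-\frac14|t|^{-1/2}$ as you assert. With the correct sign, away from the coordinate hyperplanes $v_k$ has diagonal Hessian $\frac38\diag(0,\dots,0,|x_{k+1}|^{-1/2},\dots,|x_n|^{-1/2})$, so
\[
F(\cH v_k(x))=\sum_{i>k}\arctan\bigl(\tfrac38|x_i|^{-1/2}\bigr)\;\geq\;-\sum_{i\leq k}\arctan\bigl(\tfrac38|x_i|^{-1/2}\bigr)+\sum_{i>k}\arctan\bigl(\tfrac38|x_i|^{-1/2}\bigr)=f_k(x),
\]
so the smooth points satisfy the subsolution inequality directly. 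Your entire middle section — the claim that ``the smooth points where all $x_i\neq 0$ will \emph{not} satisfy the subsolution inequality pointwise,'' and the subsequent attempts to push the action onto the hyperplanes — is an attempt to repair a problem that does not exist, created by this sign error.

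The second gap: once you correctly localize to a test point $x^*$ with $x_i^*\neq 0$ for all $i>k$ (your cusp-obstruction argument for why no $C^2$ function can touch from above on $\{x_i=0\}$, $i>k$, is right and matches the paper), you then handle the indices $i\leq k$ incorrectly. You use the universal lower bound $\arctan(\lambda)\geq-\pi/2$ for \emph{every} $i\leq k$ and try to compare $-\pi/2$ with $-\arctan(\frac38|x_i^*|^{-1/2})$, which of course fails when $x_i^*\neq 0$. The point you miss is that for $i\leq k$ with $x_i^*\neq 0$, the function $v_k$ is smooth in the $x_i$-direction with $\partial_{ii}v_k=0$, so touching from above gives $\partial_{ii}\phi(x^*)\geq 0$ and hence an eigenvalue $\geq 0$, contributing $\arctan(\lambda)\geq 0 > -\arctan(\frac38|x_i^*|^{-1/2})$. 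Only for $i\leq k$ with $x_i^*=0$ — where the concave corner of $-|x_i|$ gives no bound — do you fall back to $-\pi/2$, and there the matching term in $f_k(x^*)$ is precisely $-\pi/2$, so the comparison holds with equality. Your step (3), asserting that concavity of $-|x_i|$ somehow forces $\lambda_i(\cH\varphi)\to-\infty$, has the logic backwards: touching a concave function from above at a smooth point forces $\partial_{ii}\varphi\geq\partial_{ii}v_k$ from \emph{below}, and touching at the corner imposes nothing at all on $\partial_{ii}\varphi$. Your reduction of $u_k$ to $v_{n-k}$ via oddness and orthogonal invariance of $F$, together with $f_k(x)=-f_{n-k}(Jx)$, is correct and agrees with the paper; but the core subsolution estimate as you present it does not close.
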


\begin{proof}
Away from the axes, $v_k$ is smooth with Hessian matrix
\[\cH v_k(x) = \frac{3}{8}\diag(0\dots,0,|x_{k+1}|^{-1/2},\dots,|x_{n}|^{-1/2})\]
and, clearly, $F(\cH v_k(x)) = 0 + \sum_{i=k+1}^{n}\arctan\left(\frac{3}{8}|x_i|^{-1/2}\right) \geq f_k(x)$.

Let $\phi$ be a $C^2$ test function touching $v_k$ from above at a point $x^*\in\mR^n$. We may assume that $x^*_i \neq 0$ for all $i>k$ since no touching is possible otherwise. Thus $\bfe_i^\top\cH\phi(x^*)\bfe_i\geq \frac{3}{8}|x^*_{i}|^{-1/2}$, $i = k+1,\dots n$, and the top $n-k$ eigenvalues of $\cH\phi(x^*)$ are larger than $\frac{3}{8}|x^*_{i}|^{-1/2}$ (respectively, in some order). Likewise, for each $i\leq k$ with $x^*_i \neq  0$ we have $\bfe_i^\top\cH\phi(x^*)\bfe_i\geq 0$ and an additional eigenvalue of $\cH\phi(x^*)$ is non-negative. The remaining second order directional derivatives of $\phi$ at $x^*$ may be arbitrarily negative, providing no bound on the smallest eigenvalues, but that does not matter since
\begin{align*}
F(\cH\phi(x^*))
	&= \sum_{i=1}^{n}\arctan\left(\lambda_i(\cH\phi(x^*))\right)\\
	&\geq \sum_{\substack{i\leq k\\ x^*_i= 0}}-\frac{\pi}{2}
	+ \sum_{\substack{i\leq k\\ x^*_i\neq 0}} 0  + \sum_{i>k}c(x^*_{i}),\qquad c(t) := \arctan\left(\tfrac{3}{8}|t|^{-1/2}\right),\\
	&\geq \sum_{\substack{i\leq k\\ x^*_i= 0}}-\frac{\pi}{2}
	- \sum_{\substack{i\leq k\\ x^*_i\neq 0}}c(x^*_{i})  + \sum_{i>k}c(x^*_{i})\\
	&= f_k(x^*).
\end{align*}
This shows that $v_k$ is a subsolution of \eqref{eq:k_equation} in $\mR^n$ for all $k = 0,\dots,n$.

In order to prove that $u_k(x) = -v_{n-k}(Jx)$ is a supersolution, we first note that
\begin{align*}
- f_{n-k}(Jx)
	&= \sum_{i=1}^{n-k}c((Jx)_i) - \sum_{i=n-k+1}^{n}c((Jx)_i)\\
	&= -\sum_{i=n-k+1}^{n}c(x_{n+1-i}) + \sum_{i=1}^{n-k}c(x_{n+1-i})\\
	&= -\sum_{j=1}^{k}c(x_{j}) + \sum_{j=k+1}^{n}c(x_j), \qquad j := n+1-i,\\
	&= f_k(x).
\end{align*}
Now, let $\psi$ be a test function touching $u_k$ from below at $x^*\in\mR^n$.
Then $\phi(x) := - \psi(Jx)$ touches $v_{n-k}$ from above at $Jx^*$ because $\phi(Jx) = - \psi(x) \geq -u_k(x) = v_{n-k}(Jx)$ for $Jx$ close to $Jx^*$, and $\phi(Jx^*) = - \psi(x^*) = -u_k(x^*) = v_{n-k}(Jx^*)$. Thus, $F(\cH\phi(Jx^*)) \geq f_{n-k}(Jx^*)$. Therefore, since $F$ is odd, rotationally invariant, and $\cH\psi(x) = -J\cH\phi(Jx)J$,
\[F(\cH\psi(x^*)) = - F(\cH\phi(Jx^*)) \leq - f_{n-k}(Jx^*) = f_k(x^*)\]
as claimed.
\end{proof}

We obviously have $v_k(0)-u_k(0) = 1/2>0$. In order to create a counterexample to the comparison principle it only remains to observe that
\begin{align*}
-u_k(x) &= v_{n-k}(Jx)\\
&= \frac{1}{4} - \sum_{i=1}^{n-k}|x_{n+1-i}| + \sum_{i=n-k+1}^{n}\frac{1}{2}|x_{n+1-i}|^{3/2}\\
&= \frac{1}{4} - \sum_{j=k+1}^{n}|x_{j}| + \sum_{j=1}^{k}\frac{1}{2}|x_{j}|^{3/2}
\end{align*}
and
\begin{align*}
v_k(x)-u_k(x) &= \frac{1}{2} + \sum_{i=1}^{n}\frac{1}{2}|x_{i}|^{3/2} - |x_{i}|\\
&= \frac{1}{2} + \sum_{i=1}^{n}\frac{1}{2}|x_{i}|\left(|x_{i}|^{1/2} - 2\right)
\end{align*}
independently of $k$. If we take the domain to be the unit ball in the infinity-norm,
\[\Omega := \left\{x\in\mR^n\;\colon\; |x_i|<1\right\},\]
it follows that $v_k(x)-u_k(x) \leq 0$ whenever $|x|_\infty\leq 1$ and when there is at least one index $j$ such that $|x_j| = 1$. That is, for $x\in\partial\Omega$. 

\begin{remark}
There is nothing special about the exponent 3/2 in the sub- and supersolutions. If we adjust the phase accordingly, any number strictly between 1 and 2 would do.
\end{remark}

\begin{remark}
The subsolutions $v_k$ are not supersolutions and the supersolutions $u_k$ are not subsolutions. Thus, the question of uniqueness of \emph{solutions} in the Dirichlet problem is still open.
\end{remark}

The ideas behind the above constructions are all contained, and therefore best illustrated, by the case $n=2$, $k=1$. Then also $n-k=1$ and, dropping the subscript 1 yields,
\begin{align*}
v(x,y) &= \frac{1}{4} - |x| + \frac{1}{2}|y|^{3/2},\\
u(x,y) &= -v(y,x)
	= -\frac{1}{4} - \frac{1}{2}|x|^{3/2} + |y|,
\end{align*}
with phase
\begin{equation*}
f(x,y) =
\begin{cases}
-\arctan(\frac{3}{8}|x|^{-1/2}) + \arctan(\frac{3}{8}|y|^{-1/2}),\quad & x\neq 0, y\neq 0,\\
-\pi/2 + \arctan(\frac{3}{8}|y|^{-1/2}), & x = 0, y\neq 0,\\
-\arctan(\frac{3}{8}|x|^{-1/2}) + \pi/2, & x\neq 0, y = 0,\\
0, & x = 0, y = 0.
\end{cases}
\end{equation*}
In addition to the difference
\[v(x,y) - u(x,y) = \frac{1}{2} + \frac{1}{2}|x|\left(|x|^{1/2} - 2\right) + \frac{1}{2}|y|\left(|y|^{1/2} - 2\right)\]
the graph of these functions 
over the square
\begin{equation*}
\Omega := \left\{(x,y)\in\mR^2\;:\; |x|<1, |y|<1\right\}
\end{equation*}
are shown in Figure \ref{fig:1}.
\begin{figure}[h]
	\centering\vspace{-12pt}
	\begin{subfigure}[b]{0.45\textwidth}
		\includegraphics{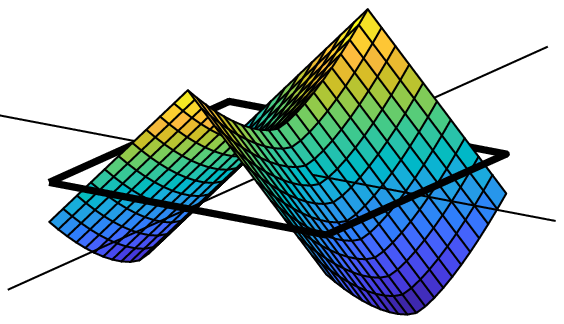}
		\caption{$v(x,y) = \frac{1}{4} - |x| + \frac{1}{2}|y|^{3/2}$.}
	\end{subfigure}
	\begin{subfigure}[b]{0.45\textwidth}
		\includegraphics{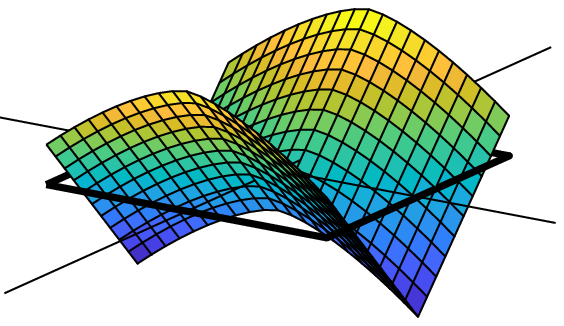}
		\caption{$u(x,y) = - v(y,x)$.}
	\end{subfigure}
	\begin{subfigure}[b]{0.45\textwidth}
		\includegraphics{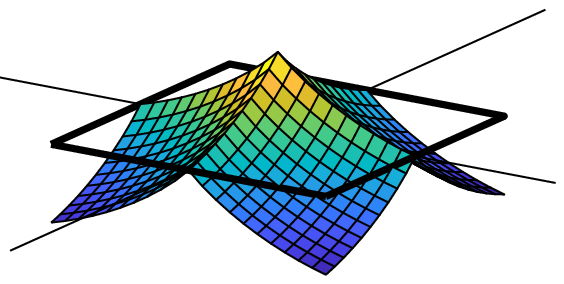}
		\caption{The difference $v-u$ has a strict interior maximum in $\oO$.}
	\end{subfigure}
	\begin{subfigure}[b]{0.45\textwidth}
		\includegraphics{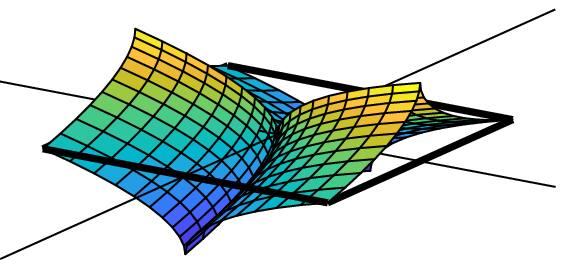}
		\caption{The phase $f$ is continuous, but not differentiable on the axes.}
	\end{subfigure}
	\caption{The case $n=2$, $k=1$ in the square $\Omega$.}\label{fig:1}
\end{figure}

\section{An alternative proof of the comparison principle when the phase does not attain the special values}

Theorem 6.18 \cite{MR4147574} establish
the comparison principle for the equation
\[\sum_{i=1}^n \arctan(\lambda_i(\cH w)) = f(x)\]
in every open and bounded $\Omega\subseteq\mR$ whenever $f\in C(\Omega)$, $\theta_n < f < \theta_0$, and
\[\theta_k := (n-2k)\frac{\pi}{2}\notin f(\Omega),\qquad k = 1,\dots,n-1.\]
The main idea in our proof, as conducted in Example 2.2 in \cite{brustad2020comparison}, is the same as in \cite{MR4147574}. Namely, to reach a contradiction when, for each $i = 1,\dots,n$, $|\lambda_i(X_j)|\to\infty$ as $j\to\infty$ for some sequence $X_j\in\Sn$. Our contribution is to show how this follows almost immediately from a general result for equations on the form $F(\cH w) = f(x)$. For convenience, we reproduce the proof below.

Note that the pathological situation when $f$ takes values outside the interval $[\theta_n,\theta_0]$ comes for free: If, say $f>\theta_0$ somewhere in $\Omega$, the comparison principle vacuously holds since the equation will have no subsolutions. On the other hand, the case $\theta_0\in f(\Omega)$, which is covered by our counterexample, is not pathological. For example, one can easily confirm that $w(x) = |x|^{3/2}$ is a viscosity \emph{solution} to the equation in $\mR^n$ when the right-hand side is the continuous extension of $f(x) := F(\cH w(x))$.


Assume that
\begin{equation}\label{eq:special_phase_value_condition}
\theta_k \notin f(\Omega) \subseteq [\theta_n,\theta_0]
\end{equation}
for all $k = 0,\dots,n$. Proposition 2.7 in \cite{brustad2020comparison} states that the comparison principle will hold if whenever $X_j\in\Sn$ is a sequence such that $\lim_{j\to\infty}F(X_j) = \theta\in f(\Omega)$, then
\[\liminf_{j\to\infty}F(X_j + \tau I) > \theta\]
for every $\tau>0$.
Suppose to the contrary that this is not true. Then there are numbers $\theta\in f(\Omega)$ and $\tau>0$, and a sequence $X_j\in \Sn$ with $F(X_j)\to \theta$, such that
\begin{align*}
0 &= \lim_{j\to\infty} F(X_j + \tau I) - F(X_j)\\
  &= \lim_{j\to\infty} \sum_{i=1}^n \arctan(\lambda_i(X_j) + \tau) - \arctan(\lambda_i(X_j))\geq 0,
\end{align*}
which -- since $\arctan$ is strictly increasing -- is possible only if each $\lambda_i(X_j)$ is unbounded as $j\to\infty$. There is thus a subsequence (still indexed by $j$) such that either $\lambda_i(X_j)\to +\infty$ or $\lambda_i(X_j)\to -\infty$. But this is a contradiction of \eqref{eq:special_phase_value_condition} as
\[f(\Omega)\ni\theta = \lim_{j\to\infty} F(X_j)\\
= \lim_{j\to\infty} \sum_{i=1}^n  \arctan(\lambda_i(X_j))\\
= \sum_{i=1}^n \pm\frac{\pi}{2}
= \theta_k
\]
for some $k=0,\dots,n$.


\bibliographystyle{alpha}
\bibliography{/Users/karlkb/Documents/references.bib}


\end{document}